\documentclass[11pt]{article}

\usepackage[T1]{fontenc}
\usepackage[utf8]{inputenc}
\usepackage{lmodern}
\usepackage{amsmath,amssymb,amsthm,mathtools}
\usepackage[a4paper,margin=1in]{geometry}
\usepackage{xcolor}
\usepackage{hyperref}
\usepackage{mathrsfs}
\usepackage{enumitem}
\numberwithin{equation}{section}
\hypersetup{colorlinks=true, linkcolor=blue!50!black, citecolor=blue!50!black, urlcolor=blue!50!black}

\newtheorem{theorem}{Theorem}[section]
\newtheorem{proposition}[theorem]{Proposition}
\newtheorem{lemma}[theorem]{Lemma}
\newtheorem{corollary}[theorem]{Corollary}
\theoremstyle{definition}
\newtheorem{definition}[theorem]{Definition}
\newtheorem{remark}[theorem]{Remark}

\newcommand{\R}{\mathbb{R}}
\newcommand{\T}{\mathbb{T}}
\newcommand{\EE}{\mathcal{E}}
\newcommand{\JJ}{\mathcal{J}}
\newcommand{\HH}{\mathcal{H}}
\newcommand{\ddmu}{\,d\mu}
\newcommand{\abs}[1]{\lvert#1\rvert}
\newcommand{\norm}[1]{\lVert#1\rVert}
\newcommand{\PV}{\operatorname{P.V.}}
\newcommand{\cc}{\mathrm{cc}}
\newcommand{\cdt}{\mathrm{cd}}
\newcommand{\dd}{\mathrm{dd}}
\newcommand{\subjclass}[2][]{\par\smallskip\noindent\textbf{MSC 2020: }#2\par}
\newcommand{\keywords}[1]{\par\smallskip\noindent\textbf{Keywords: }#1\par}

\title{\textbf{A nonlocal transmission problem on a hybrid continuous-discrete domain}}
\author{Hafida Abbas\thanks{Department of Economics, Faculty of Economics and Commercial Sciences, University of Saida, Algeria.} \and Abdelhalim Azzouz\thanks{Department of Mathematics, Faculty of Mathematics, Computer Science and Telecoms, University of Saida, Algeria. Corresponding author: \texttt{abdelhalim.azzouz@univ-saida.dz}.}}
\date{}

\begin{document}
\maketitle

\begin{abstract}
We study a quadratic nonlocal variational problem on a hybrid domain formed by a compact interval and finitely many discrete points. The associated energy splits into continuous, discrete, and interface contributions. Our main estimate shows that the interface term yields a coercive coupling between the two phases and provides an equivalent hybrid norm. As a consequence, we prove existence and uniqueness of a minimizer for the corresponding variational problem and characterize it as the unique weak solution of the associated hybrid Euler--Lagrange system. The latter combines a nonlocal integral equation on the continuous component with a finite nonlocal algebraic system on the discrete nodes.
\end{abstract}

\subjclass[2020]{Primary 26A33, 35R11, 46E35; Secondary 34B37, 34N05, 49J45.}
\keywords{nonlocal variational problem, hybrid continuous-discrete domain, interface coercivity, weak solution, Euler--Lagrange system.}

\section{Introduction}

Fractional energies arise naturally in the analysis of nonlocal equations, variational problems, and regularity theory. In the standard continuous setting they are generated by the Gagliardo seminorm and lead to the classical fractional Sobolev spaces; see, for instance, \cite{DiNezza2012, Gagliardo1957, Slobodeckij1958}. On the other hand, the calculus on time scales was introduced to provide a common language for continuous and discrete models; see \cite{Hilger1990, BohnerPeterson2001, BohnerPeterson2003}.

From the measure-theoretic point of view, the time-scale framework already provides a natural notion of Lebesgue $\Delta$-measure and multidimensional $\Delta$-integration. In particular, Bohner and Guseinov developed a multiple Lebesgue integration theory on products of time scales, while Cabada and Vivero showed how the Lebesgue $\Delta$-integral can be expressed in terms of an ordinary Lebesgue integral on the real line; see \cite{BohnerGuseinov2006, CabadaVivero2006}. These results justify the use of measure-based function spaces on $\T$ and on $\T\times\T$.

On this basis, first-order Lebesgue and Sobolev spaces on time scales have already been studied in several directions. Basic properties of Sobolev spaces endowed with the Lebesgue $\Delta$-measure were established in \cite{AgarwalOteroPereraVivero2006}, and generalized Lebesgue and Sobolev spaces with variable exponent on bounded time scales were investigated in \cite{SkrzypekSzymanska2019}. Thus, at order one, the functional-analytic setting is already available in the literature.

The fractional situation is different. Existing fractional theories on time scales are developed predominantly through specific notions of fractional differentiation, such as conformable derivatives or Riemann--Liouville-type derivatives; see, for example, \cite{BenkhettouHassaniTorres2016, HuLi2022Right, HuLi2022Left}. By contrast, a genuinely nonlocal Gagliardo-type approach based on a double-integral energy over $\T\times\T$ appears to be much less explored on time scales, especially when one has in mind variable-order constructions. This is the point of view from which the present work originates.

The purpose of the present paper is to study a nonlocal variational problem on a simple hybrid continuous-discrete domain and to identify the coercive role of the interface contribution. More precisely, for an integer $N\ge 2$, we consider
\[
\T=[0,1]\cup\{2,3,\dots,N+1\},
\]
endowed with the measure
\[
\mu=\mathbf{1}_{[0,1]}(x)\,dx+\sum_{k=2}^{N+1}\delta_k.
\]
This model consists of a continuous phase and a finite discrete phase separated by a positive gap. For functions $u:\T\to\R$, the nonlocal quadratic energy generated by the kernel $\abs{x-y}^{-1-2\alpha}$ splits into three terms:
\begin{itemize}
\item a continuous--continuous interaction on $[0,1]\times[0,1]$,
\item a discrete--discrete interaction on $\{2,\dots,N+1\}^2$,
\item a continuous--discrete interface contribution.
\end{itemize}
The first two terms correspond to the pure continuous and pure discrete components. The third one is the key feature of the model: it couples the continuous profile to the discrete variables and yields a coercive transmission mechanism between the two phases.

Our main estimate shows that the interface contribution is not a lower-order perturbation. It controls simultaneously the $L^2$ oscillation of the continuous component around its mean and the distance of each discrete value to that same mean. Consequently, the full energy is equivalent to a natural hybrid norm. This provides a robust variational framework for the boundary-value-type problem associated with the quadratic functional
\[
\JJ(u):=\frac12\EE_\alpha(u)+\frac{\lambda}{2}\norm{u}_{L^2(\T,\mu)}^2-\int_{\T}fu\ddmu,
\qquad \lambda>0.
\]
Within this framework we prove existence and uniqueness of a minimizer, define the corresponding notion of weak solution, and derive the associated hybrid Euler--Lagrange system. We interpret \eqref{eq:weak-solution} as the weak formulation of a hybrid nonlocal transmission problem on the continuous-discrete domain $\T$. The latter consists of a nonlocal integral equation on the continuous phase coupled with a finite nonlocal algebraic system on the discrete nodes.

The paper is organized as follows. Section~\ref{sec:coercive-framework} introduces the hybrid measure space, the energy, its decomposition, and the main coercivity estimate. Section~\ref{sec:variational} formulates the variational problem and establishes the equivalence between minimizers and weak solutions, together with existence and uniqueness. Section~\ref{sec:transmission} derives the hybrid transmission system in weak form and discusses its formal pointwise counterpart. The final section presents a concrete variational example on a hybrid time scale and illustrates the continuous-discrete transmission structure of the model.

\section{The hybrid setting and the main coercivity result}\label{sec:coercive-framework}
While the measure-theoretic and first-order Sobolev frameworks on time scales are already available, our purpose here is different: we introduce a nonlocal Gagliardo-type energy on the hybrid continuous-discrete domain under consideration and use it to formulate the transmission problem studied in the sequel.
\subsection{The hybrid domain and the nonlocal energy}

Fix an integer $N\ge 2$ and define
\[
\T=[0,1]\cup D,
\qquad D:=\{2,3,\dots,N+1\}.
\]
We equip $\T$ with the positive finite Borel measure
\begin{equation}\label{eq:def-mu}
\mu(A):=\int_{A\cap[0,1]}dx+\sum_{k=2}^{N+1}\delta_k(A),
\end{equation}
for every Borel set $A\subset \T$. Equivalently,
\[
\int_{\T} f\ddmu =\int_0^1 f(x)\,dx+\sum_{k=2}^{N+1} f(k)
\]
for every integrable function $f$ on $\T$.

A function $u:\T\to\R$ will be written as
\[
u=(v,a_2,\dots,a_{N+1}),
\]
where
\[
v=u|_{[0,1]},\qquad a_k=u(k)\quad (k=2,\dots,N+1).
\]
Thus the continuous and discrete phases are encoded separately.

Let $\alpha\in(0,1)$. For $u\in L^2(\T,\mu)$, define the quadratic energy
\begin{equation}\label{eq:def-energy}
\EE_\alpha(u):=
\iint_{\T\times\T}
\frac{\abs{u(x)-u(y)}^2}{\abs{x-y}^{1+2\alpha}}
\,d\mu(x)\,d\mu(y).
\end{equation}
Since the discrete phase is finite and separated from $[0,1]$ by a positive distance, the mixed and discrete contributions are automatically finite whenever the restriction $v$ belongs to $H^\alpha(0,1)$.

\begin{definition}
We define the hybrid energy space by
\[
\HH_\alpha(\T):=
\Big\{u=(v,a_2,\dots,a_{N+1}): v\in H^\alpha(0,1),\ a_k\in\R\Big\}.
\]
It is endowed with the norm
\begin{equation}\label{eq:def-Hnorm}
\norm{u}_{\HH_\alpha(\T)}^2:=\norm{u}_{L^2(\T,\mu)}^2+\EE_\alpha(u).
\end{equation}
\end{definition}

The continuous--continuous part of \eqref{eq:def-energy} is the standard Gagliardo seminorm squared on $(0,1)$, namely
\begin{equation}\label{eq:gagliardo-seminorm}
[v]_{H^\alpha(0,1)}^2:=\int_0^1\int_0^1\frac{\abs{v(x)-v(y)}^2}{\abs{x-y}^{1+2\alpha}}\,dx\,dy.
\end{equation}
\begin{lemma}[Equivalence with the product norm]\label{lem:product-norm}
There exist constants $c_0,C_0>0$, depending only on $\alpha$ and $N$, such that for every
\[u=(v,a_2,\dots,a_{N+1})\in \HH_\alpha(\T),
\]
one has
\[
c_0\Bigl(\norm{v}_{H^\alpha(0,1)}^2+\sum_{k=2}^{N+1}\abs{a_k}^2\Bigr)
\le \norm{u}_{\HH_\alpha(\T)}^2
\le C_0\Bigl(\norm{v}_{H^\alpha(0,1)}^2+\sum_{k=2}^{N+1}\abs{a_k}^2\Bigr).
\]
Consequently, $\HH_\alpha(\T)$ is a Hilbert space.
\end{lemma}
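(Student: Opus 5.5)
We expand $\norm{u}_{\HH_\alpha(\T)}^2$ using the definition \eqref{eq:def-mu} of $\mu$. This gives $\norm{u}_{L^2(\T,\mu)}^2=\norm{v}_{L^2(0,1)}^2+\sum_k\abs{a_k}^2$ and
\[
\EE_\alpha(u)=[v]_{H^\alpha(0,1)}^2+\sum_{\substack{j,k\in D\\ j\neq k}}\frac{\abs{a_j-a_k}^2}{\abs{j-k}^{1+2\alpha}}+2\sum_{k\in D}\int_0^1\frac{\abs{v(x)-a_k}^2}{\abs{x-k}^{1+2\alpha}}\,dx .
\]
The diagonal $j=k$ contributes nothing. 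The factor $2$ in front of the last sum comes from the symmetric pieces $[0,1]\times D$ and $D\times[0,1]$. We use the norm $\norm{v}_{H^\alpha(0,1)}^2=\norm{v}_{L^2(0,1)}^2+[v]_{H^\alpha(0,1)}^2$. With this decomposition, both inequalities reduce to elementary bounds on the discrete and interface terms.

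\textbf{Lower bound.} All terms in the decomposition are nonnegative, so we may drop the discrete--discrete and interface sums. This gives $\norm{u}_{\HH_\alpha(\T)}^2\ge \norm{v}_{H^\alpha(0,1)}^2+\sum_k\abs{a_k}^2$, so $c_0=1$ works.

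\textbf{Upper bound.} We use the gap between the phases. For $x\in[0,1]$ and $k\in D$ we have $\abs{x-k}\ge 1$, hence $\abs{x-k}^{-1-2\alpha}\le 1$. Combined with $\abs{v(x)-a_k}^2\le 2\abs{v(x)}^2+2\abs{a_k}^2$, this bounds the interface term:
\[
2\sum_{k}\int_0^1\frac{\abs{v-a_k}^2}{\abs{x-k}^{1+2\alpha}}\,dx\le 4N\norm{v}_{L^2(0,1)}^2+4\sum_k\abs{a_k}^2 .
\]
Similarly, $\abs{j-k}\ge1$ for $j\neq k$, so
\[
\sum_{j\neq k}\abs{a_j-a_k}^2\abs{j-k}^{-1-2\alpha}\le \sum_{j\ne k}2\bigl(\abs{a_j}^2+\abs{a_k}^2\bigr)\le 4(N-1)\sum_k\abs{a_k}^2 .
\]
Adding these bounds gives the upper inequality with, for instance, $C_0=4N+1$. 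In particular, every $u\in\HH_\alpha(\T)$ has finite energy, which justifies the remark preceding the definition. The constants in fact depend only on $N$, which is consistent with the statement.

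\textbf{Hilbert space structure.} The norm \eqref{eq:def-Hnorm} comes from the symmetric bilinear form obtained by polarization:
\[
\langle u,w\rangle=\int_\T uw\,d\mu+\iint \frac{(u(x)-u(y))(w(x)-w(y))}{\abs{x-y}^{1+2\alpha}}\,d\mu\,d\mu .
\]
This form is finite on $\HH_\alpha(\T)$ by Cauchy--Schwarz and the upper bound. By the two-sided estimate, $\HH_\alpha(\T)$ is isomorphic, with equivalent norms, to the product $H^\alpha(0,1)\times\R^N$. That product is complete because $H^\alpha(0,1)$ is a Hilbert space (standard; see \cite{DiNezza2012}). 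Completeness is preserved under equivalent norms, so $\HH_\alpha(\T)$ is complete.

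There is no real obstacle in this argument. The only points needing care are the correct accounting of the mixed terms (the factor $2$ and the vanishing diagonal) and the use of the distance bound $\abs{x-y}\ge1$ between distinct phases and distinct nodes.
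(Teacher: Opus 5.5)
Your proof is correct and follows the same route as the paper. You split the energy into its continuous--continuous, interface and discrete--discrete pieces, drop the nonnegative extra terms to get $c_0=1$, and use the unit gaps $\abs{x-k}\ge 1$ and $\abs{i-j}\ge 1$ for the upper bound. Your explicit constant $C_0=4N+1$ and your completeness argument via the isomorphism with $H^\alpha(0,1)\times\R^N$ spell out what the paper leaves as ``for some $C>0$'' and ``Consequently''.
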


\begin{proof}
The upper bound follows from the decomposition in Proposition~\ref{prop:decomposition}. The continuous--continuous term is exactly $[v]_{H^\alpha(0,1)}^2$. Since the discrete phase is finite and satisfies $\operatorname{dist}([0,1],D)=1$, one has
\[
1\le \abs{k-x}\le N+1\qquad (x\in[0,1],\ k\in D),
\]
which gives
\[
\EE_{\cdt}(v,a)
\le \sum_{k=2}^{N+1}\int_0^1 \abs{a_k-v(x)}^2\,dx
\le 2\sum_{k=2}^{N+1}\abs{a_k}^2+2N\norm{v}_{L^2(0,1)}^2.
\]
Similarly, since $D$ is finite,
\[
\EE_{\dd}(a)
\le 2\sum_{\substack{i,j=2\\ i\neq j}}^{N+1}(\abs{a_i}^2+\abs{a_j}^2)
\le 4N\sum_{k=2}^{N+1}\abs{a_k}^2.
\]
Hence
\[
\EE_\alpha(u)\le C\Bigl([v]_{H^\alpha(0,1)}^2+\norm{v}_{L^2(0,1)}^2+\sum_{k=2}^{N+1}\abs{a_k}^2\Bigr)
\]
for some $C>0$, which yields the upper bound.

For the lower bound, we simply note that
\[
\norm{u}_{\HH_\alpha(\T)}^2=\norm{u}_{L^2(\T,\mu)}^2+\EE_\alpha(u)
\ge \norm{u}_{L^2(\T,\mu)}^2
=\norm{v}_{L^2(0,1)}^2+\sum_{k=2}^{N+1}\abs{a_k}^2
\]
and also $\EE_\alpha(u)\ge [v]_{H^\alpha(0,1)}^2$. Combining these estimates gives
\[
\norm{u}_{\HH_\alpha(\T)}^2
\ge \norm{v}_{L^2(0,1)}^2+[v]_{H^\alpha(0,1)}^2+\sum_{k=2}^{N+1}\abs{a_k}^2
=\norm{v}_{H^\alpha(0,1)}^2+\sum_{k=2}^{N+1}\abs{a_k}^2,
\]
which proves the claim.
\end{proof}

\subsection{Decomposition of the energy and the interface estimate}

The energy \eqref{eq:def-energy} splits naturally into three contributions.

\begin{proposition}[Decomposition of the energy]\label{prop:decomposition}
For every $u=(v,a_2,\dots,a_{N+1})\in \HH_\alpha(\T)$,
\begin{equation}\label{eq:energy-decomposition}
\EE_\alpha(u)=\EE_{\cc}(v)+2\EE_{\cdt}(v,a)+\EE_{\dd}(a),
\end{equation}
where
\begin{align}
\EE_{\cc}(v)&:=\int_0^1\int_0^1\frac{\abs{v(x)-v(y)}^2}{\abs{x-y}^{1+2\alpha}}\,dx\,dy,
\label{eq:Ecc}\\
\EE_{\cdt}(v,a)&:=\sum_{k=2}^{N+1}\int_0^1\frac{\abs{a_k-v(x)}^2}{\abs{k-x}^{1+2\alpha}}\,dx,
\label{eq:Ecd}\\
\EE_{\dd}(a)&:=\sum_{\substack{i,j=2\\ i\neq j}}^{N+1}\frac{\abs{a_i-a_j}^2}{\abs{i-j}^{1+2\alpha}}.
\label{eq:Edd}
\end{align}
\end{proposition}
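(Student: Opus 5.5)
The decomposition in Proposition~\ref{prop:decomposition} should follow directly from writing $\mu\otimes\mu$ on $\T\times\T$ as a sum over the four blocks
\[
\T\times\T=\bigl([0,1]\times[0,1]\bigr)\cup\bigl([0,1]\times D\bigr)\cup\bigl(D\times[0,1]\bigr)\cup\bigl(D\times D\bigr),
\]
which are pairwise disjoint because $[0,1]\cap D=\emptyset$. By \eqref{eq:def-mu}, the product measure restricted to each block is, respectively, Lebesgue measure $dx\,dy$, $dx\otimes\sum_k\delta_k$, $\sum_k\delta_k\otimes dy$, and counting measure on $D\times D$. The integrand in \eqref{eq:def-energy} is nonnegative and measurable, so Tonelli's theorem lets me split $\EE_\alpha(u)$ into the four block integrals and evaluate each as an iterated integral or sum, with no integrability assumptions needed at this stage.

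I will then identify each block in turn. On $[0,1]\times[0,1]$ one has $u=v$, which gives exactly $\EE_{\cc}(v)$ in \eqref{eq:Ecc}. On $[0,1]\times D$, integrating the Dirac masses in $y$ gives
\[
\sum_{k=2}^{N+1}\int_0^1\frac{\abs{v(x)-a_k}^2}{\abs{x-k}^{1+2\alpha}}\,dx=\EE_{\cdt}(v,a).
\]
The block $D\times[0,1]$ gives the same quantity, since the integrand is symmetric in $(x,y)$; together the two mixed blocks produce the factor $2$. On $D\times D$ the double sum runs over all pairs $(i,j)$. The diagonal terms $i=j$ need separate treatment, because there the kernel $\abs{x-y}^{-1-2\alpha}$ is singular while the numerator vanishes.

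The one genuinely delicate point is this diagonal of $D\times D$. The diagonal of $[0,1]^2$ has Lebesgue measure zero, but the diagonal of $D\times D$ has $\mu\otimes\mu$-mass $N$. I will adopt the standard convention that the integrand is taken to be $0$ on the diagonal $\{x=y\}$, since its numerator vanishes there. This is implicit in the definition of the Gagliardo energy, and it is consistent with \eqref{eq:Edd}, which restricts the sum to $i\neq j$. I will state this convention explicitly at the start of the proof. With it, the discrete block reduces to $\EE_{\dd}(a)$.

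Finally, for $u\in\HH_\alpha(\T)$ all three terms are finite: $\EE_{\cc}(v)=[v]_{H^\alpha(0,1)}^2$ is finite by assumption, and the mixed and discrete terms are finite by the elementary bounds $1\le\abs{k-x}\le N+1$ and the finiteness of $D$. Hence the identity \eqref{eq:energy-decomposition} holds as an equality of finite numbers.
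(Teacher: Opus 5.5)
Your proposal is correct and follows the paper's proof: split $\mu\otimes\mu$ into the four blocks, identify each block integral, and use the symmetry of the kernel to merge the two mixed terms into $2\EE_{\cdt}(v,a)$. The paper uses the diagonal convention silently when it restricts $\EE_{\dd}$ to $i\neq j$, even though the diagonal of $D\times D$ carries $\mu\otimes\mu$-mass $N$; stating that convention explicitly and appealing to Tonelli makes your version slightly more careful than the paper's.
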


\begin{proof}
Using \eqref{eq:def-mu}, the product measure $\mu\otimes\mu$ decomposes into the sum of the continuous--continuous, continuous--discrete, discrete--continuous, and discrete--discrete parts. Substituting this decomposition into \eqref{eq:def-energy}, we obtain
\begin{align*}
\EE_\alpha(u)
&=\int_0^1\int_0^1\frac{\abs{v(x)-v(y)}^2}{\abs{x-y}^{1+2\alpha}}\,dx\,dy \\
&\quad +\sum_{k=2}^{N+1}\int_0^1\frac{\abs{a_k-v(x)}^2}{\abs{k-x}^{1+2\alpha}}\,dx
+\sum_{k=2}^{N+1}\int_0^1\frac{\abs{v(x)-a_k}^2}{\abs{x-k}^{1+2\alpha}}\,dx \\
&\quad +\sum_{\substack{i,j=2\\ i\neq j}}^{N+1}\frac{\abs{a_i-a_j}^2}{\abs{i-j}^{1+2\alpha}}.
\end{align*}
Since $\abs{k-x}=\abs{x-k}$ and $\abs{a_k-v(x)}=\abs{v(x)-a_k}$, the two mixed terms coincide, which yields \eqref{eq:energy-decomposition}.
\end{proof}

\begin{lemma}[Interface estimate]\label{lem:interface-estimate}
There exist constants $c_1,c_2>0$, depending only on $\alpha$ and $N$, such that for every $u=(v,a_2,\dots,a_{N+1})\in\HH_\alpha(\T)$,
\begin{equation}\label{eq:interface-equivalence}
c_1\sum_{k=2}^{N+1}\int_0^1\abs{a_k-v(x)}^2\,dx
\le \EE_{\cdt}(v,a)
\le c_2\sum_{k=2}^{N+1}\int_0^1\abs{a_k-v(x)}^2\,dx.
\end{equation}
\end{lemma}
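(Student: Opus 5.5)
The estimate is a pointwise two-sided bound on the kernel, integrated termwise. For $x\in[0,1]$ and $k\in D=\{2,\dots,N+1\}$ we have $k-x\ge 2-1=1$ and $k-x\le N+1-0=N+1$. Hence
\[
1\le \abs{k-x}\le N+1,
\]
as already noted in the proof of Lemma~\ref{lem:product-norm}. Since $1+2\alpha>0$, the map $t\mapsto t^{-1-2\alpha}$ is decreasing on $(0,\infty)$. This gives, for all $x\in[0,1]$ and $k\in D$,
\[
(N+1)^{-1-2\alpha}\le \frac{1}{\abs{k-x}^{1+2\alpha}}\le 1 .
\]

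Next, multiply these inequalities by the nonnegative quantity $\abs{a_k-v(x)}^2$ and integrate over $x\in(0,1)$. All integrals are finite, because $v\in H^\alpha(0,1)\subset L^2(0,1)$ and $a_k$ is a constant. Summing over $k=2,\dots,N+1$ and recalling the definition \eqref{eq:Ecd} of $\EE_{\cdt}$ yields \eqref{eq:interface-equivalence} with
\[
c_1=(N+1)^{-1-2\alpha},\qquad c_2=1 .
\]
These constants depend only on $\alpha$ and $N$, as required.

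There is no genuine obstacle. The only point to check is that the positive gap $\operatorname{dist}([0,1],D)=1$ keeps the kernel bounded above, so the upper bound holds, and that the finiteness of $D$ keeps the kernel bounded below, so the lower bound holds. If one prefers constants that depend on $k$, the sharper bounds $(k-1)^{-1-2\alpha}$ and $k^{-1-2\alpha}$ can be used termwise. The uniform choice above is enough for the statement.
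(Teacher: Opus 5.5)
Your proof is correct and follows essentially the same route as the paper: a pointwise two-sided bound on the kernel $\abs{k-x}^{-1-2\alpha}$ for $x\in[0,1]$, $k\in D$, multiplied by $\abs{a_k-v(x)}^2$, integrated over $(0,1)$ and summed over $k$. You obtain the same constants $c_1=(N+1)^{-(1+2\alpha)}$ and $c_2=1$; the only cosmetic difference is that the paper first writes the $k$-dependent bounds $k^{-1-2\alpha}$ and $(k-1)^{-1-2\alpha}$ before passing to the uniform ones, which you mention as an alternative.
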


\begin{proof}
Fix $k\in\{2,\dots,N+1\}$ and $x\in[0,1]$. Then
\[
k-1\le k-x\le k.
\]
Therefore,
\[
\frac{1}{k^{1+2\alpha}}\le \frac{1}{\abs{k-x}^{1+2\alpha}}\le \frac{1}{(k-1)^{1+2\alpha}}.
\]
Multiplying by $\abs{a_k-v(x)}^2$ and integrating over $(0,1)$, we obtain
\[
\frac{1}{k^{1+2\alpha}}\int_0^1 \abs{a_k-v(x)}^2\,dx
\le
\int_0^1 \frac{\abs{a_k-v(x)}^2}{\abs{k-x}^{1+2\alpha}}\,dx
\le
\frac{1}{(k-1)^{1+2\alpha}}\int_0^1 \abs{a_k-v(x)}^2\,dx.
\]
Summing with respect to $k$ yields \eqref{eq:interface-equivalence} with
\[
c_1=(N+1)^{-(1+2\alpha)},
\qquad
c_2=1.
\]
\end{proof}

\begin{lemma}[Algebraic decomposition around the mean]\label{lem:mean-decomposition}
Let
\[
\bar v:=\int_0^1 v(x)\,dx.
\]
Then, for every $k\in\{2,\dots,N+1\}$,
\begin{equation}\label{eq:single-mean-decomposition}
\int_0^1 \abs{a_k-v(x)}^2\,dx
=\abs{a_k-\bar v}^2+\norm{v-\bar v}_{L^2(0,1)}^2.
\end{equation}
Consequently,
\begin{equation}\label{eq:sum-mean-decomposition}
\sum_{k=2}^{N+1}\int_0^1 \abs{a_k-v(x)}^2\,dx
=
\sum_{k=2}^{N+1}\abs{a_k-\bar v}^2
+N\norm{v-\bar v}_{L^2(0,1)}^2.
\end{equation}
\end{lemma}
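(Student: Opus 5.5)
The identity is a Pythagorean decomposition in $L^2(0,1)$. The constant function $a_k-\bar v$ is orthogonal to the mean-zero function $v-\bar v$. I would first note that $v\in H^\alpha(0,1)\subset L^2(0,1)\subset L^1(0,1)$, because $(0,1)$ has finite measure. Hence $\bar v$ is a well-defined real number, and every integral below is finite.

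Fix $k$. I would write $a_k-v(x)=(a_k-\bar v)-(v(x)-\bar v)$ and expand the square pointwise:
\[
\abs{a_k-v(x)}^2=\abs{a_k-\bar v}^2-2(a_k-\bar v)\bigl(v(x)-\bar v\bigr)+\abs{v(x)-\bar v}^2 .
\]
Then I integrate over $(0,1)$. The first term contributes $\abs{a_k-\bar v}^2$, since the interval has length one. The last term contributes $\norm{v-\bar v}_{L^2(0,1)}^2$. The cross term equals $-2(a_k-\bar v)\int_0^1(v(x)-\bar v)\,dx=-2(a_k-\bar v)(\bar v-\bar v)=0$, by the definition of $\bar v$. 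This gives \eqref{eq:single-mean-decomposition}.

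For \eqref{eq:sum-mean-decomposition}, I would sum \eqref{eq:single-mean-decomposition} over $k=2,\dots,N+1$. The term $\norm{v-\bar v}_{L^2(0,1)}^2$ does not depend on $k$, and there are exactly $N$ indices, so it appears with the factor $N$.

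No step is a real obstacle. The only points needing care are the integrability that justifies splitting the integral into three finite pieces, which comes from $v\in L^2(0,1)$, and the vanishing of the cross term, which uses $\int_0^1 v\,dx=\bar v$.
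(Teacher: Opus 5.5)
Your proposal is correct and follows the paper's proof: you split $a_k-v(x)$ around $\bar v$, expand the square, note that the cross term vanishes because $\int_0^1(v(x)-\bar v)\,dx=0$, and sum over the $N$ indices. Your remark that $v\in L^2(0,1)\subset L^1(0,1)$, which makes $\bar v$ and all three integrals finite, is a small justification that the paper leaves implicit.
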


\begin{proof}
Fix $k\in\{2,\dots,N+1\}$. We write
\[
a_k-v(x)=(a_k-\bar v)+(\bar v-v(x)).
\]
Squaring and integrating over $(0,1)$, we obtain
\begin{align*}
\int_0^1\abs{a_k-v(x)}^2\,dx
&=\int_0^1 \abs{a_k-\bar v}^2\,dx
+\int_0^1\abs{v(x)-\bar v}^2\,dx \\
&\quad +2(a_k-\bar v)\int_0^1(\bar v-v(x))\,dx.
\end{align*}
Since
\[
\int_0^1(\bar v-v(x))\,dx=\bar v-\int_0^1v(x)\,dx=0,
\]
the mixed term vanishes. This proves \eqref{eq:single-mean-decomposition}. Summing over $k$ gives \eqref{eq:sum-mean-decomposition}.
\end{proof}

\subsection{Interface coercivity and the hybrid energy space}

We now derive the main coercivity statement.

\begin{theorem}[Interface coercivity]\label{thm:interface-coercivity}
There exist constants $C_1,C_2>0$, depending only on $\alpha$ and $N$, such that for every $u=(v,a_2,\dots,a_{N+1})\in\HH_\alpha(\T)$,
\begin{align}
C_1\Big(
[v]_{H^\alpha(0,1)}^2
+\norm{v-\bar v}_{L^2(0,1)}^2
+\sum_{k=2}^{N+1}\abs{a_k-\bar v}^2
\Big)
&\le \EE_\alpha(u)
\label{eq:coercivity-lower}\\
&\le
C_2\Big(
[v]_{H^\alpha(0,1)}^2
+\norm{v-\bar v}_{L^2(0,1)}^2
+\sum_{k=2}^{N+1}\abs{a_k-\bar v}^2
\Big).
\label{eq:coercivity-upper}
\end{align}
\end{theorem}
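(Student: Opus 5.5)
The plan is to combine the decomposition of Proposition~\ref{prop:decomposition} with the interface estimate of Lemma~\ref{lem:interface-estimate} and the mean decomposition of Lemma~\ref{lem:mean-decomposition}. Writing
\[
Q(u):=\sum_{k=2}^{N+1}\abs{a_k-\bar v}^2+N\norm{v-\bar v}_{L^2(0,1)}^2,
\]
Lemma~\ref{lem:mean-decomposition} identifies $Q(u)$ with $\sum_k\int_0^1\abs{a_k-v(x)}^2\,dx$. Lemma~\ref{lem:interface-estimate} then gives $c_1 Q(u)\le \EE_{\cdt}(v,a)\le c_2 Q(u)$. Since $1\le N$, the quantity $Q(u)$ is comparable to $\norm{v-\bar v}_{L^2}^2+\sum_k\abs{a_k-\bar v}^2$, with constants $1$ and $N$.

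\emph{Lower bound.} All three terms in \eqref{eq:energy-decomposition} are nonnegative. Hence $\EE_\alpha(u)\ge \EE_{\cc}(v)=[v]_{H^\alpha(0,1)}^2$ and $\EE_\alpha(u)\ge 2\EE_{\cdt}(v,a)\ge 2c_1\bigl(\norm{v-\bar v}_{L^2}^2+\sum_k\abs{a_k-\bar v}^2\bigr)$. Averaging these two inequalities gives \eqref{eq:coercivity-lower} with $C_1=\tfrac12\min\{1,2c_1\}$.

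\emph{Upper bound.} The terms $\EE_{\cc}$ and $2\EE_{\cdt}\le 2c_2 N(\cdots)$ are handled directly by the above. The only remaining point is $\EE_{\dd}(a)$, which must be controlled by the right-hand side. I would write $a_i-a_j=(a_i-\bar v)-(a_j-\bar v)$ and use $\abs{i-j}\ge1$ for $i\ne j$. This yields
\[
\EE_{\dd}(a)\le 2\sum_{i\neq j}\bigl(\abs{a_i-\bar v}^2+\abs{a_j-\bar v}^2\bigr)\le 4(N-1)\sum_k\abs{a_k-\bar v}^2.
\]
Summing the three bounds gives \eqref{eq:coercivity-upper} with, say, $C_2=1+2N+4N$. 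This uses $c_2=1$.

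The only genuine step is the discrete--discrete term in the upper bound. Its key is to center at $\bar v$ rather than bounding by $\abs{a_k}^2$ as in Lemma~\ref{lem:product-norm}, since the latter would destroy the invariance under constants. Beyond that, the argument is bookkeeping of constants depending only on $\alpha$ and $N$.
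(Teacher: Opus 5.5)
Your proof is correct and follows the paper's argument: you use the energy decomposition, the two-sided interface bound with $c_2=1$, the splitting around $\bar v$, and centering $\EE_{\dd}$ at $\bar v$. The differences are cosmetic: you average the two lower bounds where the paper adds $\EE_{\cc}+2\EE_{\cdt}$ directly (this costs only a factor $\tfrac12$ in $C_1$), and you use $|i-j|\ge1$ to get the factor $4(N-1)$. That count is in fact safer than the paper's $4\sum_{m=1}^{N-1}m^{-(1+2\alpha)}$, which undercounts because for an interior node each distance $m$ occurs twice, so the paper's factor should be doubled.
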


\begin{proof}
By Proposition~\ref{prop:decomposition},
\[
\EE_\alpha(u)=\EE_{\cc}(v)+2\EE_{\cdt}(v,a)+\EE_{\dd}(a).
\]
Since all three terms on the right-hand side are nonnegative, Lemmas~\ref{lem:interface-estimate} and \ref{lem:mean-decomposition} imply
\begin{align*}
\EE_\alpha(u)
&\ge \EE_{\cc}(v)+2\EE_{\cdt}(v,a) \\
&\ge [v]_{H^\alpha(0,1)}^2+2c_1\sum_{k=2}^{N+1}\int_0^1\abs{a_k-v(x)}^2\,dx \\
&= [v]_{H^\alpha(0,1)}^2
+2c_1\sum_{k=2}^{N+1}\abs{a_k-\bar v}^2
+2c_1N\norm{v-\bar v}_{L^2(0,1)}^2.
\end{align*}
This proves \eqref{eq:coercivity-lower}.

For the reverse estimate, we use again Proposition~\ref{prop:decomposition}. The term $\EE_{\cc}(v)$ is exactly $[v]_{H^\alpha(0,1)}^2$. By Lemmas~\ref{lem:interface-estimate} and \ref{lem:mean-decomposition},
\[
\EE_{\cdt}(v,a)
\le c_2\Big(
\sum_{k=2}^{N+1}\abs{a_k-\bar v}^2+N\norm{v-\bar v}_{L^2(0,1)}^2
\Big).
\]
It remains to estimate the discrete--discrete contribution. For $i\neq j$,
\[
\abs{a_i-a_j}^2
\le 2\abs{a_i-\bar v}^2+2\abs{a_j-\bar v}^2.
\]
Hence,
\begin{align*}
\EE_{\dd}(a)
&\le 2\sum_{\substack{i,j=2\\ i\neq j}}^{N+1}
\frac{\abs{a_i-\bar v}^2+\abs{a_j-\bar v}^2}{\abs{i-j}^{1+2\alpha}} \\
&\le 4\Big(\sum_{m=1}^{N-1}\frac{1}{m^{1+2\alpha}}\Big)
\sum_{k=2}^{N+1}\abs{a_k-\bar v}^2.
\end{align*}
Combining these estimates gives \eqref{eq:coercivity-upper}.
\end{proof}

\begin{remark}
Theorem~\ref{thm:interface-coercivity} shows that the interface term is not a negligible correction. It controls the $L^2$ oscillation of the continuous component and pins the discrete variables to the average value of the continuous phase. In the variational problem below, this interface term acts as a coercive transmission mechanism between the two phases.
\end{remark}

\begin{corollary}[Hybrid Poincar\'e-type inequality]\label{cor:hybrid-poincare}
There exists a constant $C_P>0$, depending only on $\alpha$ and $N$, such that for every $u=(v,a_2,\dots,a_{N+1})\in \HH_\alpha(\T)$,
\[
\norm{v-\bar v}_{L^2(0,1)}^2+\sum_{k=2}^{N+1}\abs{a_k-\bar v}^2
\le C_P\,\EE_\alpha(u).
\]
\end{corollary}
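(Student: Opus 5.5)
This corollary follows directly from the lower bound \eqref{eq:coercivity-lower} in Theorem~\ref{thm:interface-coercivity}. That bound controls the full bracket
\[
[v]_{H^\alpha(0,1)}^2+\norm{v-\bar v}_{L^2(0,1)}^2+\sum_{k=2}^{N+1}\abs{a_k-\bar v}^2
\]
by $C_1^{-1}\EE_\alpha(u)$. Since the Gagliardo seminorm term is nonnegative, we may drop it. This gives the claim with $C_P:=C_1^{-1}$, which depends only on $\alpha$ and $N$ because $C_1$ does.

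To make the constant explicit and self-contained, I would instead argue from Proposition~\ref{prop:decomposition}. First discard the nonnegative terms $\EE_{\cc}(v)$ and $\EE_{\dd}(a)$, which gives $\EE_\alpha(u)\ge 2\EE_{\cdt}(v,a)$. Next apply the lower bound of Lemma~\ref{lem:interface-estimate} with $c_1=(N+1)^{-(1+2\alpha)}$. Finally use the identity \eqref{eq:sum-mean-decomposition} of Lemma~\ref{lem:mean-decomposition}. Together these yield
\[
\EE_\alpha(u)\ge 2c_1\Bigl(\sum_{k=2}^{N+1}\abs{a_k-\bar v}^2+N\norm{v-\bar v}_{L^2(0,1)}^2\Bigr)
\ge 2c_1\Bigl(\sum_{k=2}^{N+1}\abs{a_k-\bar v}^2+\norm{v-\bar v}_{L^2(0,1)}^2\Bigr),
\]
using $N\ge 1$. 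One may therefore take $C_P=\tfrac12(N+1)^{1+2\alpha}$.

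There is no real obstacle here. The only point to check is that every quantity is finite and well defined for $u\in\HH_\alpha(\T)$. This holds because $v\in H^\alpha(0,1)\subset L^2(0,1)\subset L^1(0,1)$, so $\bar v$ exists. Notably, the estimate does not use the Gagliardo seminorm at all: the interface coupling alone yields the Poincaré-type control.
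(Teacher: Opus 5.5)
Your proposal is correct and matches the paper's proof, which is exactly the one-line deduction from the lower bound \eqref{eq:coercivity-lower} obtained by dropping the nonnegative seminorm term. Your explicit variant, which discards $\EE_{\cc}$ and $\EE_{\dd}$ and then applies Lemma~\ref{lem:interface-estimate} and \eqref{eq:sum-mean-decomposition}, just reproduces the paper's own proof of that lower bound, and your constant $C_P=\tfrac12(N+1)^{1+2\alpha}$ is valid.
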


\begin{proof}
This is an immediate consequence of the lower bound \eqref{eq:coercivity-lower} in Theorem~\ref{thm:interface-coercivity}.
\end{proof}

A direct consequence is that the quadratic form
\[
\|u\|_{\ast}^2:=\|u\|_{L^2(\T,\mu)}^2+ [v]_{H^\alpha(0,1)}^2+\norm{v-\bar v}_{L^2(0,1)}^2+\sum_{k=2}^{N+1}\abs{a_k-\bar v}^2
\]
defines a norm equivalent to \eqref{eq:def-Hnorm} on $\HH_\alpha(\T)$.

\section{The variational problem and weak solutions}\label{sec:variational}
In line with the variational treatment of nonlocal problems in weak form and prove its well-posedness; see, for instance, \cite{NonlocalWeakSolutions,MRS2016,SteinbachWendland2000}. We now formulate the associated hybrid transmission problem in weak form and prove its well-posedness. The interface coercivity obtained in the previous section is the key tool in this step.
\subsection{Formulation of the problem}

Let $\lambda>0$ and let $f\in L^2(\T,\mu)$. We consider the quadratic functional
\begin{equation}\label{eq:def-functional}
\JJ(u):=\frac12\EE_\alpha(u)+\frac{\lambda}{2}\norm{u}_{L^2(\T,\mu)}^2-\int_{\T}f u\ddmu,
\qquad u\in\HH_\alpha(\T).
\end{equation}

The bilinear form naturally associated with the energy is
\begin{equation}\label{eq:def-bilinear}
\mathfrak a(u,\varphi):=
\iint_{\T\times\T}
\frac{(u(x)-u(y))(\varphi(x)-\varphi(y))}{\abs{x-y}^{1+2\alpha}}
\,d\mu(x)\,d\mu(y).
\end{equation}

\begin{definition}
We say that $u\in\HH_\alpha(\T)$ is a \emph{weak solution} of the hybrid nonlocal problem associated with $(\lambda,f)$ if
\begin{equation}\label{eq:weak-solution}
\mathfrak a(u,\varphi)+\lambda\int_{\T}u\varphi\ddmu=\int_{\T}f\varphi\ddmu
\qquad\text{for every }\varphi\in\HH_\alpha(\T).
\end{equation}
\end{definition}

Equation \eqref{eq:weak-solution} may be regarded as the weak form of a nonlocal transmission problem on the hybrid domain: the continuous phase is driven by a nonlocal integral operator, the discrete phase by a finite algebraic system, and the two parts are coupled through the interface term.

\subsection{Existence, uniqueness, and variational characterization}

\begin{theorem}[Existence and uniqueness of the minimizer]\label{thm:existence-uniqueness}
For every $\lambda>0$ and every $f\in L^2(\T,\mu)$, the functional $\JJ$ admits a unique minimizer in $\HH_\alpha(\T)$.
\end{theorem}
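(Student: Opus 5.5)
The plan is to apply the Lax--Milgram theorem, or equivalently the Riesz representation theorem, on the Hilbert space $\HH_\alpha(\T)$ provided by Lemma~\ref{lem:product-norm}. We introduce the symmetric bilinear form
\[
B(u,\varphi):=\mathfrak a(u,\varphi)+\lambda\int_{\T}u\varphi\ddmu ,
\]
together with the linear functional $L(\varphi):=\int_{\T}f\varphi\ddmu$. Then $\JJ(u)=\tfrac12 B(u,u)-L(u)$, because $\mathfrak a(u,u)=\EE_\alpha(u)$.

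The first step is continuity. By the Cauchy--Schwarz inequality applied with respect to the measure $\abs{x-y}^{-1-2\alpha}\,d\mu(x)\,d\mu(y)$, we get $\abs{\mathfrak a(u,\varphi)}\le \EE_\alpha(u)^{1/2}\EE_\alpha(\varphi)^{1/2}$. This makes $\mathfrak a$ well defined and bounded on $\HH_\alpha(\T)$, since $\EE_\alpha$ is finite there by the upper bound in Lemma~\ref{lem:product-norm}. It follows that $\abs{B(u,\varphi)}\le \max(1,\lambda)\norm{u}_{\HH_\alpha}\norm{\varphi}_{\HH_\alpha}$. Likewise $\abs{L(\varphi)}\le \norm{f}_{L^2(\T,\mu)}\norm{\varphi}_{L^2(\T,\mu)}\le \norm{f}_{L^2(\T,\mu)}\norm{\varphi}_{\HH_\alpha}$, so $L$ is bounded.

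The second step is coercivity:
\[
B(u,u)=\EE_\alpha(u)+\lambda\norm{u}_{L^2(\T,\mu)}^2\ge \min(1,\lambda)\norm{u}_{\HH_\alpha(\T)}^2 ,
\]
which holds directly by the definition \eqref{eq:def-Hnorm}. The interface coercivity of Theorem~\ref{thm:interface-coercivity} is therefore not even needed here, because $\lambda>0$.

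Consequently $B$ is an inner product whose norm is equivalent to the norm of $\HH_\alpha(\T)$. By the Riesz representation theorem there is a unique $u_*$ with $B(u_*,\varphi)=L(\varphi)$ for all $\varphi$. For any $w$ we then have the identity
\[
\JJ(u_*+w)-\JJ(u_*)=B(u_*,w)-L(w)+\tfrac12B(w,w)=\tfrac12B(w,w),
\]
and the right-hand side is strictly positive for $w\neq0$. Hence $u_*$ is a minimizer, and it is the unique one.

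Alternatively, one can use the direct method. $\JJ$ is bounded below and coercive, since $\JJ(u)\ge \tfrac{\min(1,\lambda)}2\norm{u}^2-\norm f\norm u$. A minimizing sequence is therefore bounded and has a weakly convergent subsequence. $\JJ$ is weakly lower semicontinuous, being a continuous convex quadratic form minus a continuous linear term. Uniqueness then follows from strict convexity, which is guaranteed by the term $\lambda\norm{u}^2$.

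The only point needing care is the well-definedness and boundedness of $\mathfrak a$ on $\HH_\alpha(\T)$, in particular the finiteness of the mixed and discrete terms. This is exactly what the upper bound in Lemma~\ref{lem:product-norm} gives, so I expect no genuine obstacle.
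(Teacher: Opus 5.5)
Your proof is correct, but it takes a different and shorter route than the paper.

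The paper runs the direct method. It asserts strict convexity and then obtains coercivity in several steps:
\begin{itemize}
\item it invokes the lower bound of Theorem~\ref{thm:interface-coercivity};
\item it controls the mean $\bar v$ by Jensen's inequality;
\item it reassembles $\norm{v}_{H^\alpha(0,1)}^2+\sum_k\abs{a_k}^2$;
\item it returns to $\norm{u}_{\HH_\alpha(\T)}^2$ through Lemma~\ref{lem:product-norm}.
\end{itemize}
It then extracts a weakly convergent minimizing sequence.

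You notice instead that $\norm{u}_{\HH_\alpha(\T)}^2=\norm{u}_{L^2(\T,\mu)}^2+\EE_\alpha(u)$ holds by definition. Since $\lambda>0$, the bound $\EE_\alpha(u)+\lambda\norm{u}_{L^2(\T,\mu)}^2\ge\min(1,\lambda)\norm{u}_{\HH_\alpha(\T)}^2$ is immediate, so the detour through the interface estimate is unnecessary for this statement. You then apply the Riesz theorem to the equivalent inner product $B$.

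Your route gives three things the paper's does not:
\begin{itemize}
\item explicit constants;
\item an explicit Cauchy--Schwarz check that $\mathfrak a$ is absolutely convergent and bounded on $\HH_\alpha(\T)$, which the paper leaves implicit;
\item through the identity $\JJ(u_*+w)-\JJ(u_*)=\tfrac12B(w,w)$, existence, uniqueness and the equivalence with the weak formulation of Theorem~\ref{thm:weak-var-equivalence}, all at once.
\end{itemize}

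The paper's direct-method framing is the one that extends to the non-quadratic energies mentioned in its conclusion, where no Riesz representation is available.

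The interface coercivity, in the form of Corollary~\ref{cor:hybrid-poincare}, would become genuinely necessary only at $\lambda=0$. There $\EE_\alpha$ vanishes on constants, and one must minimize over functions with $\bar v=0$ under the compatibility condition $\int_{\T}f\ddmu=0$. The corollary is exactly what pins the discrete values $a_k$ to the continuous mean in that case.
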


\begin{proof}
The mapping $u\mapsto \EE_\alpha(u)$ is quadratic and nonnegative, and the term $u\mapsto \lambda\norm{u}_{L^2(\T,\mu)}^2$ is strictly convex because $\lambda>0$. Therefore $\JJ$ is strictly convex on the Hilbert space $\HH_\alpha(\T)$.

We next prove coercivity. By Theorem~\ref{thm:interface-coercivity},
\[
\EE_\alpha(u)\ge C_1\Big(
[v]_{H^\alpha(0,1)}^2
+\norm{v-\bar v}_{L^2(0,1)}^2
+\sum_{k=2}^{N+1}\abs{a_k-\bar v}^2
\Big).
\]
It remains to control the common mean level $\bar v$. Since $\bar v=\int_0^1 v(x)\,dx$, Jensen's inequality gives
\[
\abs{\bar v}^2\le \int_0^1 \abs{v(x)}^2\,dx\le \norm{u}_{L^2(\T,\mu)}^2.
\]
Hence
\[
\norm{v}_{L^2(0,1)}^2
\le 2\norm{v-\bar v}_{L^2(0,1)}^2+2\abs{\bar v}^2
\le 2\norm{v-\bar v}_{L^2(0,1)}^2+2\norm{u}_{L^2(\T,\mu)}^2,
\]
and, for each $k\in\{2,\dots,N+1\}$,
\[
\abs{a_k}^2\le 2\abs{a_k-\bar v}^2+2\abs{\bar v}^2
\le 2\abs{a_k-\bar v}^2+2\norm{u}_{L^2(\T,\mu)}^2.
\]
Combining these inequalities with the lower bound above, we infer that there exists $c>0$ such that
\[
\EE_\alpha(u)+\lambda\norm{u}_{L^2(\T,\mu)}^2
\ge c\Bigl(\norm{v}_{H^\alpha(0,1)}^2+\sum_{k=2}^{N+1}\abs{a_k}^2\Bigr).
\]
By Lemma~\ref{lem:product-norm}, the right-hand side controls $\norm{u}_{\HH_\alpha(\T)}^2$. Therefore,
\[
\EE_\alpha(u)+\lambda\norm{u}_{L^2(\T,\mu)}^2
\ge c\norm{u}_{\HH_\alpha(\T)}^2.
\]
By Cauchy--Schwarz and Young's inequality,
\[
\left|\int_{\T}fu\ddmu\right|
\le \norm{f}_{L^2(\T,\mu)}\norm{u}_{L^2(\T,\mu)}
\le \frac{c}{4}\norm{u}_{\HH_\alpha(\T)}^2+C\norm{f}_{L^2(\T,\mu)}^2.
\]
Consequently,
\[
\JJ(u)\to +\infty\qquad\text{as }\norm{u}_{\HH_\alpha(\T)}\to\infty.
\]
Thus $\JJ$ is coercive and bounded from below.

Let $(u_n)$ be a minimizing sequence. Coercivity implies that $(u_n)$ is bounded in $\HH_\alpha(\T)$. Since $\HH_\alpha(\T)$ is a Hilbert space, there exist a subsequence, still denoted by $(u_n)$, and some $u\in\HH_\alpha(\T)$ such that $u_n\rightharpoonup u$ weakly in $\HH_\alpha(\T)$. The quadratic form $u\mapsto \EE_\alpha(u)+\lambda\norm{u}_{L^2(\T,\mu)}^2$ is weakly lower semicontinuous, whereas the linear functional $u\mapsto \int_{\T}fu\ddmu$ is weakly continuous. Therefore,
\[
\JJ(u)\le \liminf_{n\to\infty}\JJ(u_n)=\inf_{w\in\HH_\alpha(\T)}\JJ(w).
\]
Hence $u$ is a minimizer. Uniqueness follows from strict convexity.
\end{proof}

\begin{theorem}[Variational characterization of weak solutions]\label{thm:weak-var-equivalence}
A function $u\in\HH_\alpha(\T)$ is the unique minimizer of $\JJ$ if and only if it is the unique weak solution of \eqref{eq:weak-solution}.
\end{theorem}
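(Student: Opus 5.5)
The plan is to work with the quadratic form $Q(u):=\EE_\alpha(u)+\lambda\norm{u}_{L^2(\T,\mu)}^2$. It is associated with the symmetric bilinear form
\[
b(u,\varphi):=\mathfrak a(u,\varphi)+\lambda\int_{\T}u\varphi\ddmu ,
\]
so that $\JJ(u)=\tfrac12 b(u,u)-\ell(u)$ with $\ell(\varphi):=\int_{\T}f\varphi\ddmu$.

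\textbf{Preliminaries.} First I will check that $b$ is well defined and continuous on $\HH_\alpha(\T)\times\HH_\alpha(\T)$. By Cauchy--Schwarz in $L^2(\T\times\T,\abs{x-y}^{-1-2\alpha}d\mu\,d\mu)$ we have $\abs{\mathfrak a(u,\varphi)}\le \EE_\alpha(u)^{1/2}\EE_\alpha(\varphi)^{1/2}$. The diagonal $x=y$ causes no trouble: on $[0,1]$ it is Lebesgue-null, and on $D$ the integrand vanishes there, as in \eqref{eq:Edd}, where only $i\neq j$ appears. Likewise $\ell$ is a bounded linear functional. Symmetry of $b$ is immediate.

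\textbf{Minimizer $\Rightarrow$ weak solution.} Fix $\varphi\in\HH_\alpha(\T)$ and expand
\[
\JJ(u+t\varphi)=\JJ(u)+t\bigl(b(u,\varphi)-\ell(\varphi)\bigr)+\tfrac{t^2}{2}Q(\varphi).
\]
If $u$ minimizes $\JJ$, the real quadratic polynomial $t\mapsto \JJ(u+t\varphi)$ has a minimum at $t=0$. Hence its linear coefficient vanishes, which is exactly \eqref{eq:weak-solution}.

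\textbf{Weak solution $\Rightarrow$ minimizer.} If $u$ satisfies \eqref{eq:weak-solution}, the same identity with $t=1$ and $\varphi=w-u$ gives
\[
\JJ(w)=\JJ(u)+\tfrac12 Q(w-u)\ge \JJ(u)\qquad\text{for every } w.
\]
So $u$ is a minimizer. By Theorem~\ref{thm:existence-uniqueness} it is the unique one.

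\textbf{Uniqueness of the weak solution.} If $u_1,u_2$ are weak solutions, subtracting the two identities and testing with $\varphi=u_1-u_2$ gives $Q(u_1-u_2)=0$. Since $\lambda>0$, this forces $\norm{u_1-u_2}_{L^2(\T,\mu)}=0$. Then $\EE_\alpha(u_1-u_2)=0$ as well, so the $\HH_\alpha$-norm of the difference vanishes and $u_1=u_2$.

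Combined with the existence in Theorem~\ref{thm:existence-uniqueness}, the unique minimizer is a weak solution, and every weak solution coincides with it. This yields the stated equivalence.

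The only genuinely delicate point is the continuity and bilinearity of $\mathfrak a$ on the hybrid space, that is, justifying the expansion of $\EE_\alpha(u+t\varphi)$. I will handle it by the weighted Cauchy--Schwarz bound above together with the decomposition of Proposition~\ref{prop:decomposition}. Everything else is the algebra of quadratic functionals.
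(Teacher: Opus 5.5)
Your proof is correct and takes essentially the same route as the paper: both compute the first variation of $t\mapsto\JJ(u+t\varphi)$ and identify its vanishing with \eqref{eq:weak-solution}. The only difference is that your exact identity $\JJ(w)=\JJ(u)+\tfrac12\bigl(\EE_\alpha(w-u)+\lambda\norm{w-u}_{L^2(\T,\mu)}^2\bigr)$ spells out the direction ``weak solution $\Rightarrow$ minimizer'' and the uniqueness of weak solutions, which the paper gets more tersely from the strict convexity of $\JJ$ in Theorem~\ref{thm:existence-uniqueness}.
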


\begin{proof}
Let $u\in\HH_\alpha(\T)$ and $\varphi\in\HH_\alpha(\T)$. For $t\in\R$,
\[
\JJ(u+t\varphi)
=\frac12\EE_\alpha(u+t\varphi)+\frac{\lambda}{2}\norm{u+t\varphi}_{L^2(\T,\mu)}^2-\int_{\T}f(u+t\varphi)\ddmu.
\]
Since the energy is quadratic,
\[
\frac{d}{dt}\Big|_{t=0}\frac12\EE_\alpha(u+t\varphi)=\mathfrak a(u,\varphi),
\]
and similarly,
\[
\frac{d}{dt}\Big|_{t=0}\frac{\lambda}{2}\norm{u+t\varphi}_{L^2(\T,\mu)}^2
=\lambda\int_{\T}u\varphi\ddmu.
\]
The derivative of the linear term is
\[
\frac{d}{dt}\Big|_{t=0}\int_{\T}f(u+t\varphi)\ddmu
=\int_{\T}f\varphi\ddmu.
\]
Hence $u$ is a critical point of $\JJ$ if and only if \eqref{eq:weak-solution} holds. Since $\JJ$ is strictly convex by Theorem~\ref{thm:existence-uniqueness}, it has at most one critical point. The conclusion follows.
\end{proof}

\section{The hybrid transmission system}\label{sec:transmission}

The weak formulation can be split into its continuous and discrete components. This yields a coupled transmission system between the two phases.

\begin{theorem}[Weak Euler--Lagrange identity]\label{thm:weak-EL}
Let $u=(v,a_2,\dots,a_{N+1})\in\HH_\alpha(\T)$ be the minimizer of $\JJ$. Then, for every $\varphi=(\psi,b_2,\dots,b_{N+1})\in\HH_\alpha(\T)$,
\begin{align}\label{eq:split-weak-form}
&\int_0^1\int_0^1
\frac{(v(x)-v(y))(\psi(x)-\psi(y))}{\abs{x-y}^{1+2\alpha}}\,dx\,dy \notag\\
&\quad +2\sum_{k=2}^{N+1}\int_0^1
\frac{(a_k-v(x))(b_k-\psi(x))}{\abs{k-x}^{1+2\alpha}}\,dx \\
&\quad +\sum_{\substack{i,j=2\\ i\neq j}}^{N+1}
\frac{(a_i-a_j)(b_i-b_j)}{\abs{i-j}^{1+2\alpha}}
+\lambda\int_{\T}u\varphi\ddmu
=\int_{\T}f\varphi\ddmu.\notag
\end{align}
\end{theorem}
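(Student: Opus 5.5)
The plan is to combine Theorem~\ref{thm:weak-var-equivalence} with the polarized form of the energy decomposition in Proposition~\ref{prop:decomposition}. By Theorem~\ref{thm:existence-uniqueness} the minimizer $u$ exists. By Theorem~\ref{thm:weak-var-equivalence} it satisfies \eqref{eq:weak-solution} for every test function $\varphi\in\HH_\alpha(\T)$. It therefore suffices to show that, for $u=(v,a)$ and $\varphi=(\psi,b)$, the bilinear form $\mathfrak a(u,\varphi)$ equals the sum of the first three terms on the left of \eqref{eq:split-weak-form}.

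The first step is to check that $\mathfrak a(u,\varphi)$ is well defined as an absolutely convergent integral on $\HH_\alpha(\T)$. Pointwise Cauchy--Schwarz gives
\[
\abs{(u(x)-u(y))(\varphi(x)-\varphi(y))}\le \tfrac12\abs{u(x)-u(y)}^2+\tfrac12\abs{\varphi(x)-\varphi(y)}^2 .
\]
Hence the integrand is dominated by an integrable function whose integral is $\frac12(\EE_\alpha(u)+\EE_\alpha(\varphi))<\infty$, using Lemma~\ref{lem:product-norm}. This justifies splitting the integral over pieces of $\T\times\T$.

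The second step repeats the proof of Proposition~\ref{prop:decomposition} with the bilinear integrand. The product measure $\mu\otimes\mu$ splits into four parts:
\begin{itemize}
\item the continuous--continuous part $dx\,dy$ on $[0,1]^2$;
\item the two mixed parts, $dx\otimes\delta_k$ and $\delta_k\otimes dy$;
\item the discrete--discrete part $\delta_i\otimes\delta_j$.
\end{itemize}
The discrete diagonal $i=j$ contributes nothing, since the numerator $(a_i-a_i)(b_i-b_i)$ vanishes there (the kernel singularity is harmless by convention, exactly as in Proposition~\ref{prop:decomposition}). The continuous diagonal has $dx\,dy$-measure zero.

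The two mixed terms are
\[
\int_0^1\frac{(v(x)-a_k)(\psi(x)-b_k)}{\abs{x-k}^{1+2\alpha}}\,dx
\quad\text{and}\quad
\int_0^1\frac{(a_k-v(y))(b_k-\psi(y))}{\abs{k-y}^{1+2\alpha}}\,dy .
\]
They coincide, because the product of the two sign flips is $+1$ and the kernel is symmetric. This produces the factor $2$ in \eqref{eq:split-weak-form}.

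Substituting these pieces into \eqref{eq:weak-solution} gives exactly \eqref{eq:split-weak-form}. An alternative route is to polarize \eqref{eq:energy-decomposition}, using $\mathfrak a(u,\varphi)=\frac14(\EE_\alpha(u+\varphi)-\EE_\alpha(u-\varphi))$ together with the same identity for each of $\EE_{\cc},\EE_{\cdt},\EE_{\dd}$. This avoids redoing the measure splitting.

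The only mildly delicate step is the integrability justification in the first step, which is needed so that Fubini and the splitting are legitimate. The domination bound above handles it, so I expect no real obstacle.
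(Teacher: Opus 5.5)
Your proposal is correct and matches the paper's proof: invoke Theorem~\ref{thm:weak-var-equivalence} to get \eqref{eq:weak-solution}, then split $\mathfrak a(u,\varphi)$ over the four pieces of $\mu\otimes\mu$, identify the two mixed terms to obtain the factor $2$, and drop the discrete diagonal. Your domination bound justifying the splitting, which the paper leaves implicit, and your polarization shortcut via Proposition~\ref{prop:decomposition} are both valid additions.
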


\begin{proof}
Let $u=(v,a_2,\dots,a_{N+1})\in\HH_\alpha(\T)$ be the minimizer of $\JJ$. By Theorem~\ref{thm:weak-var-equivalence}, $u$ is the unique weak solution of \eqref{eq:weak-solution}. Hence, for every test function
\[
\varphi=(\psi,b_2,\dots,b_{N+1})\in\HH_\alpha(\T),
\]
one has
\begin{equation}\label{eq:proof-thm71-weak}
\mathfrak a(u,\varphi)+\lambda\int_{\T}u\varphi\,\ddmu=\int_{\T}f\varphi\,\ddmu.
\end{equation}
It therefore remains to compute the bilinear form $\mathfrak a(u,\varphi)$ in terms of the continuous and discrete components.

By definition,
\[
\mathfrak a(u,\varphi)=\iint_{\T\times\T}
\frac{(u(x)-u(y))(\varphi(x)-\varphi(y))}{\abs{x-y}^{1+2\alpha}}\,d\mu(x)\,d\mu(y).
\]
Using the decomposition of the measure
\[
d\mu(x)=\mathbf 1_{[0,1]}(x)\,dx+\sum_{k=2}^{N+1}\delta_k(x),
\]
the product measure $\mu\otimes\mu$ splits into four contributions:
continuous--continuous, continuous--discrete, discrete--continuous, and discrete--discrete. We examine them separately.

For the continuous--continuous part, since $u|_{[0,1]}=v$ and $\varphi|_{[0,1]}=\psi$, we obtain
\[
I_{cc}:=\int_0^1\int_0^1
\frac{(v(x)-v(y))(\psi(x)-\psi(y))}{\abs{x-y}^{1+2\alpha}}\,dx\,dy.
\]
For the continuous--discrete part, using $u(k)=a_k$ and $\varphi(k)=b_k$, we get
\[
I_{cd}:=\sum_{k=2}^{N+1}\int_0^1
\frac{(v(x)-a_k)(\psi(x)-b_k)}{\abs{x-k}^{1+2\alpha}}\,dx.
\]
For the discrete--continuous part, by the symmetry of the kernel and the identity $\abs{k-x}=\abs{x-k}$, we have
\[
I_{dc}:=\sum_{k=2}^{N+1}\int_0^1
\frac{(a_k-v(x))(b_k-\psi(x))}{\abs{k-x}^{1+2\alpha}}\,dx.
\]
Since
\[
(v(x)-a_k)(\psi(x)-b_k)=(a_k-v(x))(b_k-\psi(x)),
\]
it follows that $I_{cd}=I_{dc}$. Therefore the mixed contribution equals
\[
I_{cd}+I_{dc}=2\sum_{k=2}^{N+1}\int_0^1
\frac{(a_k-v(x))(b_k-\psi(x))}{\abs{k-x}^{1+2\alpha}}\,dx.
\]
Finally, for the discrete--discrete part we obtain
\[
I_{dd}:=\sum_{i=2}^{N+1}\sum_{j=2}^{N+1}
\frac{(a_i-a_j)(b_i-b_j)}{\abs{i-j}^{1+2\alpha}}.
\]
The terms with $i=j$ vanish because $(a_i-a_i)(b_i-b_i)=0$, hence
\[
I_{dd}=\sum_{\substack{i,j=2\\ i\neq j}}^{N+1}
\frac{(a_i-a_j)(b_i-b_j)}{\abs{i-j}^{1+2\alpha}}.
\]
Combining the four pieces yields
\begin{align*}
\mathfrak a(u,\varphi)
&=\int_0^1\int_0^1
\frac{(v(x)-v(y))(\psi(x)-\psi(y))}{\abs{x-y}^{1+2\alpha}}\,dx\,dy\\
&\quad+2\sum_{k=2}^{N+1}\int_0^1
\frac{(a_k-v(x))(b_k-\psi(x))}{\abs{k-x}^{1+2\alpha}}\,dx\\
&\quad+\sum_{\substack{i,j=2\\ i\neq j}}^{N+1}
\frac{(a_i-a_j)(b_i-b_j)}{\abs{i-j}^{1+2\alpha}}.
\end{align*}
Substituting this identity into \eqref{eq:proof-thm71-weak} gives exactly \eqref{eq:split-weak-form}. This completes the proof.
\end{proof}

\begin{remark}[Formal pointwise system]
Assume that the weak solution $u=(v,a_2,\dots,a_{N+1})$ has sufficient additional regularity and integrability for the pointwise expressions below to make sense. Then \eqref{eq:split-weak-form} corresponds formally to
\begin{equation}\label{eq:strong-continuous}
2\,\PV\int_0^1\frac{v(x)-v(y)}{\abs{x-y}^{1+2\alpha}}\,dy
+2\sum_{k=2}^{N+1}\frac{v(x)-a_k}{\abs{k-x}^{1+2\alpha}}
+\lambda v(x)=f(x)
\end{equation}
for $x\in(0,1)$, and to
\begin{equation}\label{eq:strong-discrete}
2\int_0^1\frac{a_k-v(x)}{\abs{k-x}^{1+2\alpha}}\,dx
+2\sum_{\substack{j=2\\ j\neq k}}^{N+1}\frac{a_k-a_j}{\abs{k-j}^{1+2\alpha}}
+\lambda a_k=f(k)
\end{equation}
for each $k\in\{2,\dots,N+1\}$.

The first equation is a nonlocal integral equation on the continuous phase, whereas the second one is a finite nonlocal algebraic system on the discrete phase. The interface contribution acts as a transmission term: it inserts the discrete variables into the continuous equation and, conversely, the continuous profile into each discrete balance law.
\end{remark}

\section{A variational example on a hybrid time scale}

In this section, we study a concrete variational problem on a hybrid time scale and derive both its weak formulation and its coupled continuous-discrete structure. Unlike the finite-dimensional reduction introduced later, this problem is formulated on the full energy space and therefore retains the full variational structure of the model.

\subsection{The hybrid time scale and the energy space}

We consider the hybrid time scale
\[
\mathbb T=[0,1]\cup\{2,3\},
\]
endowed with the measure
\[
d\mu(x)=\mathbf 1_{[0,1]}(x)\,dx+\delta_2+\delta_3.
\]
A function $u:\mathbb T\to\mathbb R$ is identified with a triple
\[
u=(v,a,b),
\]
where
\[
v=u|_{[0,1]},\qquad a=u(2),\qquad b=u(3).
\]

For $\alpha\in(0,1)$, we define the hybrid energy
\begin{align}
\mathcal E_\alpha(u)
&=
\int_0^1\int_0^1
\frac{|v(x)-v(y)|^2}{|x-y|^{1+2\alpha}}\,dx\,dy
\notag\\
&\quad
+2\int_0^1\frac{|a-v(x)|^2}{|2-x|^{1+2\alpha}}\,dx
+2\int_0^1\frac{|b-v(x)|^2}{|3-x|^{1+2\alpha}}\,dx
+2|a-b|^2.
\label{eq:example-energy}
\end{align}
Accordingly, we introduce the Hilbert space
\[
\mathcal H_\alpha(\mathbb T)
=
\bigl\{u=(v,a,b):\ v\in H^\alpha(0,1),\ a,b\in\mathbb R\bigr\},
\]
endowed with the norm
\[
\|u\|_{\mathcal H_\alpha(\mathbb T)}^2
=
\|u\|_{L^2(\mu)}^2+\mathcal E_\alpha(u),
\]
where
\[
\|u\|_{L^2(\mu)}^2=\int_0^1 |v(x)|^2\,dx+a^2+b^2.
\]

\subsection{The variational problem}

Let $\lambda>0$, let $f\in L^2(0,1)$, and let $F,G\in\mathbb R$. We consider the functional
\begin{equation}\label{eq:example-functional}
\mathcal J(u)
=
\frac12\mathcal E_\alpha(u)
+\frac{\lambda}{2}\|u\|_{L^2(\mu)}^2
-
\left(
\int_0^1 f(x)v(x)\,dx+Fa+Gb
\right),
\end{equation}
for $u=(v,a,b)\in \mathcal H_\alpha(\mathbb T)$.

This is the nonlocal variational problem associated with the hybrid time scale $\mathbb T$:
\begin{equation}\label{eq:example-minimization}
\min_{u\in \mathcal H_\alpha(\mathbb T)} \mathcal J(u).
\end{equation}

\begin{proposition}\label{prop:example-existence}
The minimization problem \eqref{eq:example-minimization} admits a unique minimizer in $\mathcal H_\alpha(\mathbb T)$.
\end{proposition}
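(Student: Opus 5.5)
The plan is to reduce Proposition~\ref{prop:example-existence} to Theorem~\ref{thm:existence-uniqueness} in the case $N=2$, $D=\{2,3\}$. First I check that the energy \eqref{eq:example-energy} coincides with $\EE_\alpha$ from \eqref{eq:def-energy} for this $\T$, using Proposition~\ref{prop:decomposition}. The continuous--continuous part is $\EE_{\cc}(v)$. The interface part $2\EE_{\cdt}$ gives exactly the two weighted integrals against $|2-x|^{-1-2\alpha}$ and $|3-x|^{-1-2\alpha}$. The discrete--discrete sum over the ordered pairs $(2,3)$ and $(3,2)$ gives $2|a-b|^2/1^{1+2\alpha}=2|a-b|^2$. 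Hence $\mathcal E_\alpha$, the space $\mathcal H_\alpha(\mathbb T)$ and its norm are precisely those of Section~\ref{sec:coercive-framework} with $N=2$. By Lemma~\ref{lem:product-norm}, this space is a Hilbert space.

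Next, I identify the linear term. I define $\tilde f\in L^2(\T,\mu)$ by $\tilde f|_{[0,1]}=f$, $\tilde f(2)=F$ and $\tilde f(3)=G$. Then $\int_\T \tilde f u\,d\mu=\int_0^1 fv\,dx+Fa+Gb$, and $\|\tilde f\|_{L^2(\mu)}^2=\|f\|_{L^2(0,1)}^2+F^2+G^2<\infty$. So the functional \eqref{eq:example-functional} is exactly $\JJ$ from \eqref{eq:def-functional} with data $(\lambda,\tilde f)$. Theorem~\ref{thm:existence-uniqueness} then yields existence and uniqueness of the minimizer directly.

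The only point needing care is that the general framework assumes $N\ge 2$, and $N=2$ is admissible. The remaining step is the bookkeeping check that the discrete--discrete term and the factor $2$ in front of the interface terms match the decomposition \eqref{eq:energy-decomposition}. I do not expect any genuine obstacle.

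If a self-contained argument is preferred, I would repeat the direct method on this space. First, $\JJ$ is strictly convex, since $\lambda>0$. Second, it is coercive: by Theorem~\ref{thm:interface-coercivity} together with the Jensen bound $|\bar v|^2\le\|u\|_{L^2(\mu)}^2$, we get $\mathcal E_\alpha(u)+\lambda\|u\|^2_{L^2(\mu)}\ge c\|u\|^2_{\mathcal H_\alpha}$, and Young's inequality absorbs the linear term. Third, a bounded minimizing sequence has a weakly convergent subsequence, and the limit is a minimizer by weak lower semicontinuity of the continuous convex quadratic part. Uniqueness then follows from strict convexity.
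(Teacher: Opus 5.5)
Your proof is correct, but it is organized differently from the paper's. Your main argument makes the proposition a literal corollary of Theorem~\ref{thm:existence-uniqueness}. With $N=2$, Proposition~\ref{prop:decomposition} shows that \eqref{eq:example-energy} is exactly $\EE_\alpha$, including the factor $2$ on the interface integrals and $\EE_{\dd}=2|a-b|^2$ since $|2-3|=1$. The extension $\tilde f$ then turns the linear term into $\int_\T \tilde f u\,d\mu$.

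The paper never writes this identification down. Instead it reruns the direct method in the concrete setting. There, coercivity comes in one line from the definition of the norm: $\frac12\mathcal E_\alpha(u)+\frac{\lambda}{2}\|u\|_{L^2(\mu)}^2\ge\frac12\min\{1,\lambda\}\|u\|_{\mathcal H_\alpha(\mathbb T)}^2$. The linear term is bounded by $C\|u\|_{L^2(\mu)}$, with $C$ depending on $\|f\|_{L^2(0,1)}$, $|F|$ and $|G|$.

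Your reduction is more economical and makes explicit that the example is an instance of the general framework. The paper's route is self-contained.

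On your fallback argument: using Theorem~\ref{thm:interface-coercivity} and Jensen's inequality to get $\mathcal E_\alpha(u)+\lambda\|u\|_{L^2(\mu)}^2\ge c\|u\|_{\mathcal H_\alpha(\mathbb T)}^2$ is correct but an unnecessary detour. The bound holds with $c=\min\{1,\lambda\}$ directly, because $\|u\|_{\mathcal H_\alpha(\mathbb T)}^2=\|u\|_{L^2(\mu)}^2+\mathcal E_\alpha(u)$ by definition. So the interface estimate plays no role in existence and uniqueness. The same holds for the paper's proof of Theorem~\ref{thm:existence-uniqueness}, whose structure your fallback mirrors.
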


\begin{proof}
The functional $\mathcal J$ is the sum of the nonnegative quadratic form $\frac12\mathcal E_\alpha(u)$, the strictly positive quadratic term $\frac{\lambda}{2}\|u\|_{L^2(\mu)}^2$, and the continuous linear functional
\[
u=(v,a,b)\mapsto \int_0^1 f(x)v(x)\,dx+Fa+Gb.
\]
Hence $\mathcal J$ is strictly convex on $\mathcal H_\alpha(\mathbb T)$.

Moreover, by Cauchy--Schwarz,
\[
\left|\int_0^1 f(x)v(x)\,dx+Fa+Gb\right|
\le
\|f\|_{L^2(0,1)}\|v\|_{L^2(0,1)}+|F||a|+|G||b|
\le
C\|u\|_{L^2(\mu)}
\]
for some constant $C>0$. Therefore
\[
\mathcal J(u)
\ge
\frac12\mathcal E_\alpha(u)+\frac{\lambda}{2}\|u\|_{L^2(\mu)}^2-C\|u\|_{L^2(\mu)},
\]
so $\mathcal J$ is coercive on $\mathcal H_\alpha(\mathbb T)$. Since its quadratic part is convex and continuous and its linear part is weakly continuous, $\mathcal J$ is weakly lower semicontinuous. The direct method of the calculus of variations therefore yields the existence of a minimizer, and strict convexity implies uniqueness.
\end{proof}

\subsection{Weak formulation and continuous-discrete decomposition}

For
\[
u=(v,a,b)\in\mathcal H_\alpha(\mathbb T),
\qquad
\varphi=(\psi,\eta,\zeta)\in\mathcal H_\alpha(\mathbb T),
\]
we define the bilinear form
\begin{align}
\mathcal B_\alpha(u,\varphi)
&=
\int_0^1\int_0^1
\frac{(v(x)-v(y))(\psi(x)-\psi(y))}{|x-y|^{1+2\alpha}}\,dx\,dy
\notag\\
&\quad
+2\int_0^1
\frac{(a-v(x))(\eta-\psi(x))}{|2-x|^{1+2\alpha}}\,dx
\notag\\
&\quad
+2\int_0^1
\frac{(b-v(x))(\zeta-\psi(x))}{|3-x|^{1+2\alpha}}\,dx
+2(a-b)(\eta-\zeta).
\label{eq:example-bilinear}
\end{align}

\begin{proposition}\label{prop:example-weak}
A function $u=(v,a,b)\in\mathcal H_\alpha(\mathbb T)$ is the unique minimizer of $\mathcal J$ if and only if
\begin{equation}\label{eq:example-weak}
\mathcal B_\alpha(u,\varphi)
+\lambda\left(\int_0^1 v(x)\psi(x)\,dx+a\eta+b\zeta\right)
=
\int_0^1 f(x)\psi(x)\,dx+F\eta+G\zeta
\end{equation}
for every $\varphi=(\psi,\eta,\zeta)\in\mathcal H_\alpha(\mathbb T)$.
\end{proposition}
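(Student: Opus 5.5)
The plan is to reproduce, in this concrete setting, the argument of Theorem~\ref{thm:weak-var-equivalence}. First I will check that $\mathcal E_\alpha(u)=\mathcal B_\alpha(u,u)$ and that $\mathcal B_\alpha$ is a symmetric bilinear form. Both follow by inspecting \eqref{eq:example-energy} and \eqref{eq:example-bilinear} term by term. The last term uses $|2-3|^{1+2\alpha}=1$; the factor $2$ in front of $|a-b|^2$ comes from the two ordered pairs $(2,3)$ and $(3,2)$.

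Next I will check that $\mathcal B_\alpha$ is continuous on $\mathcal H_\alpha(\mathbb T)$. Writing $\mathcal B_\alpha(u,\varphi)$ as a sum of nonnegative-weight integrals and applying Cauchy--Schwarz termwise gives
\[
|\mathcal B_\alpha(u,\varphi)|\le \mathcal E_\alpha(u)^{1/2}\mathcal E_\alpha(\varphi)^{1/2}\le \|u\|_{\mathcal H_\alpha(\mathbb T)}\|\varphi\|_{\mathcal H_\alpha(\mathbb T)}.
\]
Hence all quantities in \eqref{eq:example-weak} are finite.

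Then, for fixed $u,\varphi$ and $t\in\R$, I expand $\mathcal J(u+t\varphi)$ as an exact quadratic polynomial in $t$:
\[
\mathcal J(u+t\varphi)=\mathcal J(u)+t\,L(u,\varphi)+\frac{t^2}{2}\Bigl(\mathcal E_\alpha(\varphi)+\lambda\|\varphi\|_{L^2(\mu)}^2\Bigr),
\]
where $L(u,\varphi)$ denotes the left-hand side of \eqref{eq:example-weak} minus its right-hand side.

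The two directions then go as follows.
\begin{itemize}
\item If $u$ is a minimizer, $t\mapsto\mathcal J(u+t\varphi)$ has a minimum at $t=0$. Its derivative there vanishes, so $L(u,\varphi)=0$ for every $\varphi$, which is \eqref{eq:example-weak}.
\item Conversely, if \eqref{eq:example-weak} holds, take $t=1$ and $\varphi=w-u$ for an arbitrary $w$. The expansion gives $\mathcal J(w)-\mathcal J(u)=\tfrac12(\mathcal E_\alpha(w-u)+\lambda\|w-u\|^2_{L^2(\mu)})$. This is nonnegative, and it is strictly positive unless $w=u$ in $L^2(\mu)$, since $\lambda>0$. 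So $u$ is the unique minimizer.
\end{itemize}
The word ``unique'' in the statement is then consistent with Proposition~\ref{prop:example-existence}.

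I expect no serious obstacle. The only points needing care are the bookkeeping of the factor $2$ in the mixed and discrete terms when polarizing $\mathcal E_\alpha$, and justifying continuity of $\mathcal B_\alpha$ so that the polynomial expansion is legitimate.
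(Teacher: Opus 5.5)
Your proof is correct. The forward direction is the same first-variation computation the paper performs. The difference lies in the converse.

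The paper only shows that \eqref{eq:example-weak} characterizes critical points of $\mathcal J$. It then invokes Proposition~\ref{prop:example-existence} (existence plus strict convexity) to identify the critical point with the unique minimizer. In doing so it leaves implicit that a critical point of a convex functional is a global minimizer.

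Your exact identity $\mathcal J(w)-\mathcal J(u)=\tfrac12\bigl(\mathcal E_\alpha(w-u)+\lambda\|w-u\|_{L^2(\mu)}^2\bigr)$, valid for every $w$ whenever $u$ satisfies \eqref{eq:example-weak}, gives minimality and uniqueness in one step. It does not need the existence result at all. You also prove the bound $|\mathcal B_\alpha(u,\varphi)|\le\|u\|_{\mathcal H_\alpha(\mathbb T)}\|\varphi\|_{\mathcal H_\alpha(\mathbb T)}$, which the paper takes for granted, so your argument is fully self-contained.

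The paper's route is shorter but relies on convexity facts it does not spell out.
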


\begin{proof}
Let $u=(v,a,b)\in\mathcal H_\alpha(\mathbb T)$ and $\varphi=(\psi,\eta,\zeta)\in\mathcal H_\alpha(\mathbb T)$. For $t\in\mathbb R$,
\[
\mathcal J(u+t\varphi)
=
\frac12\mathcal E_\alpha(u+t\varphi)
+\frac{\lambda}{2}\|u+t\varphi\|_{L^2(\mu)}^2
-
\left(
\int_0^1 f(v+t\psi)\,dx+F(a+t\eta)+G(b+t\zeta)
\right).
\]
Differentiating with respect to $t$ at $t=0$, we obtain
\[
\frac{d}{dt}\mathcal J(u+t\varphi)\Big|_{t=0}
=
\mathcal B_\alpha(u,\varphi)
+\lambda\left(\int_0^1 v\psi\,dx+a\eta+b\zeta\right)
-\left(\int_0^1 f\psi\,dx+F\eta+G\zeta\right).
\]
Hence $u$ is a critical point of $\mathcal J$ if and only if \eqref{eq:example-weak} holds for every $\varphi$. By Proposition~\ref{prop:example-existence}, the critical point is unique and coincides with the unique minimizer.
\end{proof}

Equation \eqref{eq:example-weak} is the weak formulation of a hybrid nonlocal transmission problem on the time scale $\mathbb T$.

\begin{proposition}\label{prop:example-splitting}
Let $u=(v,a,b)\in\mathcal H_\alpha(\mathbb T)$. Then $u$ satisfies \eqref{eq:example-weak} for every $\varphi=(\psi,\eta,\zeta)\in\mathcal H_\alpha(\mathbb T)$ if and only if the following three conditions hold:

\medskip
\noindent
\textup{(i)} for every $\psi\in H^\alpha(0,1)$,
\begin{align}
&\int_0^1\int_0^1
\frac{(v(x)-v(y))(\psi(x)-\psi(y))}{|x-y|^{1+2\alpha}}\,dx\,dy
\notag\\
&\qquad
-2\int_0^1\frac{(a-v(x))\psi(x)}{|2-x|^{1+2\alpha}}\,dx
-2\int_0^1\frac{(b-v(x))\psi(x)}{|3-x|^{1+2\alpha}}\,dx
\notag\\
&\qquad
+\lambda\int_0^1 v(x)\psi(x)\,dx
=
\int_0^1 f(x)\psi(x)\,dx;
\label{eq:example-continuous}
\end{align}

\medskip
\noindent
\textup{(ii)}
\begin{equation}\label{eq:example-discrete-a}
2\int_0^1\frac{a-v(x)}{|2-x|^{1+2\alpha}}\,dx
+2(a-b)+\lambda a
=
F;
\end{equation}

\medskip
\noindent
\textup{(iii)}
\begin{equation}\label{eq:example-discrete-b}
2\int_0^1\frac{b-v(x)}{|3-x|^{1+2\alpha}}\,dx
+2(b-a)+\lambda b
=
G.
\end{equation}
\end{proposition}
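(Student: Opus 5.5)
The plan is to exploit the linearity of \eqref{eq:example-weak} in the test function $\varphi=(\psi,\eta,\zeta)$. Every $\varphi\in\mathcal H_\alpha(\mathbb T)$ decomposes as
\[
\varphi=(\psi,0,0)+(0,\eta,0)+(0,0,\zeta),
\]
and each of the three pieces again belongs to $\mathcal H_\alpha(\mathbb T)$. Indeed, the zero function lies in $H^\alpha(0,1)$, and any real numbers are admissible discrete values. Both sides of \eqref{eq:example-weak} are linear in $\varphi$; for $\mathcal B_\alpha(u,\cdot)$ this is read off directly from \eqref{eq:example-bilinear}. Hence \eqref{eq:example-weak} holds for all $\varphi$ if and only if it holds for each of the three families of test functions separately.

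The main work is to evaluate \eqref{eq:example-weak} on each family.

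\emph{Continuous test functions.} Take $\varphi=(\psi,0,0)$. In \eqref{eq:example-bilinear} the cross terms reduce to
\[
2\int_0^1\frac{(a-v(x))(-\psi(x))}{|2-x|^{1+2\alpha}}\,dx
\quad\text{and}\quad
2\int_0^1\frac{(b-v(x))(-\psi(x))}{|3-x|^{1+2\alpha}}\,dx,
\]
and the discrete--discrete term $2(a-b)(\eta-\zeta)$ vanishes. The $\lambda$-term becomes $\lambda\int_0^1 v\psi\,dx$ and the right-hand side becomes $\int_0^1 f\psi\,dx$. This gives exactly \eqref{eq:example-continuous}.

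\emph{First discrete node.} Take $\varphi=(0,1,0)$. Since $\psi=0$, the double integral vanishes. The first cross term gives $2\int_0^1 (a-v(x))|2-x|^{-1-2\alpha}\,dx$ and the second cross term is zero. The discrete term gives $2(a-b)$, the $\lambda$-term gives $\lambda a$, and the right-hand side is $F$. This is \eqref{eq:example-discrete-a}. By linearity in $\eta$, the general case $(0,\eta,0)$ is just $\eta$ times this identity.

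\emph{Second discrete node.} Take $\varphi=(0,0,1)$. Now $(\eta-\zeta)=-1$, so the discrete term gives $-2(a-b)=2(b-a)$. The remaining terms produce \eqref{eq:example-discrete-b} in the same way.

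For the converse direction, I multiply (ii) by $\eta$ and (iii) by $\zeta$, then add them to (i). I then reassemble the terms using \eqref{eq:example-bilinear}: the cross terms combine as
\[
(a-v)\eta-(a-v)\psi=(a-v)(\eta-\psi),
\]
and similarly for $b$ and $\zeta$, while $2(a-b)\eta+2(b-a)\zeta=2(a-b)(\eta-\zeta)$. This recovers \eqref{eq:example-weak} for the general $\varphi$.

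The only point needing care, and the closest thing to an obstacle, is checking that each integral in (i)--(iii) is finite, so that splitting the sums is legitimate. For $x\in[0,1]$ the kernels $|2-x|^{-1-2\alpha}$ and $|3-x|^{-1-2\alpha}$ are bounded by $1$. Since $v,\psi\in L^2(0,1)\subset L^1(0,1)$, all cross integrals converge absolutely. The double integral is finite by Cauchy--Schwarz together with $v,\psi\in H^\alpha(0,1)$. Once this is verified, the rest is bookkeeping of signs, especially the $-\psi$ contributions and the sign flip of $(\eta-\zeta)$ in (iii).
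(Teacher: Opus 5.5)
Your proposal is correct and follows the paper's proof. For the forward direction you test with $(\psi,0,0)$, $(0,1,0)$ and $(0,0,1)$, and for the converse you multiply (ii) by $\eta$ and (iii) by $\zeta$ and add them to (i). Your extra check that the interface integrals are finite is a harmless refinement that the paper leaves implicit: the kernels are bounded by $1$ on $[0,1]$, and $v\psi\in L^1(0,1)$ by Cauchy--Schwarz.
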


\begin{proof}
Assume first that $u$ satisfies \eqref{eq:example-weak} for every $(\psi,\eta,\zeta)\in\mathcal H_\alpha(\mathbb T)$. Choosing $(\psi,\eta,\zeta)=(\psi,0,0)$ gives \eqref{eq:example-continuous}. Choosing $(\psi,\eta,\zeta)=(0,1,0)$ gives \eqref{eq:example-discrete-a}, and choosing $(\psi,\eta,\zeta)=(0,0,1)$ gives \eqref{eq:example-discrete-b}.

Conversely, assume that \eqref{eq:example-continuous}, \eqref{eq:example-discrete-a}, and \eqref{eq:example-discrete-b} hold. Let $(\psi,\eta,\zeta)\in\mathcal H_\alpha(\mathbb T)$ be arbitrary. Multiplying \eqref{eq:example-discrete-a} by $\eta$, multiplying \eqref{eq:example-discrete-b} by $\zeta$, and adding the resulting identities to \eqref{eq:example-continuous}, we recover exactly \eqref{eq:example-weak}. This proves the equivalence.
\end{proof}

The previous proposition shows that the hybrid weak problem consists of a nonlocal equation on the continuous phase, coupled with two scalar nonlocal balance relations on the discrete phase. The interface terms transmit information in both directions between the interval $[0,1]$ and the discrete nodes $2$ and $3$.

\subsection{A reduced Galerkin approximation}

The full variational problem above can be approximated on finite-dimensional subspaces of $\mathcal H_\alpha(\mathbb T)$. For instance, one may restrict the continuous component to affine functions,
\[
v(x)=mx+c,
\]
and consider the reduced space
\[
\mathcal X_{\mathrm{red}}
=
\{(mx+c,a,b):\ m,c,a,b\in\mathbb R\}.
\]
The restriction of $\mathcal J$ to $\mathcal X_{\mathrm{red}}$ then yields a finite-dimensional quadratic minimization problem, which may be viewed as a Galerkin approximation of the full hybrid transmission problem. This reduction is useful for explicit computations, but it should be regarded as a derived approximation of the full problem rather than as a substitute for it.
\section{Conclusion and Perspectives}

We have studied a hybrid nonlocal transmission problem on a continuous-discrete domain, formulated through a quadratic Gagliardo-type energy. The analysis shows that the interface contribution plays a central role: it yields a coercive coupling between the continuous and discrete phases and provides the key estimate underlying the whole variational theory. In particular, this leads to a Poincar\'e-type inequality adapted to the hybrid setting, as well as to the existence and uniqueness of a weak solution for the associated variational problem.

The weak formulation obtained in this way naturally splits into a nonlocal equation on the continuous component and discrete balance relations on the nodal part. This confirms that the model should be understood as a transmission problem rather than as a mere juxtaposition of continuous and discrete energies. The example discussed in the last section further illustrates that the hybrid framework remains genuinely variational and gives rise to explicit coupled systems.

A natural continuation of this work would be to consider more general finite discrete sets, to study reduced Galerkin-type approximations of the hybrid problem, and to extend the analysis to nonlinear energies with similar transmission structure. These directions remain close to the present setting and seem realistic within the same general approach.

\section*{Declarations}
\subsection*{competing interests} The authors declare that they have no known competing financial interests or personal relationships that could have appeared to influence the work reported in this paper.

\subsection*{Funding sources}
This research did not receive any specific grant from funding agencies in the public, commercial, or not-for-profit sectors.

\subsection*{Declaration of Generative AI and AI-assisted technologies in the writing process}
During the preparation of this work, the authors used ChatGPT to improve the language, readability, and presentation of the manuscript. The authors edited the content and take full responsibility for the final version of the manuscript.

\end{document}